\newtheorem{thm}{Theorem}[section]
\newtheorem{lm}[thm]{Lemma}
\newtheorem{fact}[thm]{Fact}
\newtheorem{cor}[thm]{Corollary}
\newtheorem{prop}[thm]{Proposition}
\newtheorem{exm}[thm]{Example}
\theoremstyle{definition}
\newtheorem*{definition}{Definition}
\newcommand{\wB}{\widetilde{B}}
\newcommand{\wb}{\widetilde{b}}
\newcommand{\ess}{\subseteq_{\text{e}}}
\newcommand{\Z}{\mathbb Z}
\newcommand{\Nn}{\mathbb N}
\newcommand{\Q}{\mathbb Q}
\newcommand{\J}{\mathcal{J}}
\newcommand{\Tc}{\mathcal{T}}
\newcommand{\Uc}{\mathcal{U}}
\newcommand{\End}{\text{\rm End}}
\DeclareMathOperator{\an}{\rm rann}
\DeclareMathOperator{\I}{\rm I}
\DeclareMathOperator{\II}{\rm II}
\DeclareMathOperator{\III}{\rm III}
\begin{document}
\title{Self-injective von Neumann regular rings and K\"othe's Conjecture}
\author[Peter K\'alnai]{Peter K\'alnai}
\email{pkalnai@gmail.com}
\author[J. \v Zemli\v cka ]{Jan \v Zemli\v cka}
\email{zemlicka@karlin.mff.cuni.cz}

\address{Department of Algebra, Charles University in Prague, Faculty of Mathematics and Physics Sokolovsk\' a 83, 186 75 Praha 8, Czech Republic}


\begin{abstract}
One of the many equivalent formulation of the K\"othe's conjecture is the assertion that there exists no ring which contains two nil right ideals whose sum is not nil. We discuss several consequences of an observation that if the Koethe conjecture fails then there exists a counterexample in the form of a countable local subring of a suitable self-injective prime (von Neumann) regular ring.
\end{abstract}

\date{\today}

\keywords{von Neumann regular}

\renewcommand{\thefootnote}{}

\maketitle
\setcounter{page}{1}


\footnote{2010 \emph{Mathematics Subject Classification}: Primary ; Secondary .} 

In the famous paper \cite{Koth1930}, Gottfried K\"othe asked whether a ring with no non-zero two-sided nil ideal necessarily contains no non-zero one-sided nil ideal. The affirmative answer to this question is usually referred as K\"othe's Conjecture. Despite of plenty established equivalent reformulations and many known particular classes of rings satisfying the conjecture, a general answer on this question is not known. The original formulation and most famous translations of K\"othe's Conjecture use the language of associative rings without unit. Recall several of them \cite{Krem1972,Smok2005,FerrPucz1989} (see also \cite{Smok2000,Smok2001,Smok2006}):
\begin{itemize} 
\item  the matrix ring $M_n(R)$ is nil for every nil ring $R$,
\item $R[x]$ is Jacobson radical for every nil ring $R$,
\item $R[x]$ is not left primitive for every nil ring $R$,
\item every ring which is a sum of a nilpotent subring and a nil subring is nil. 
\end{itemize}

The present paper deals with its characterization in terms of unital rings, namely, we say that a (unital associative) ring \emph{satisfies the condition (NK)} if it contains two nil right ideals whose sum is not nil. K\"othe's Conjecture is then equivalent to the property that there exists no ring satisfying the condition (NK) \cite[10.28]{Lam2001}. Recall that the conjecture holds for all right Noetherian rings \cite[10.30]{Lam2001}, PI-rings \cite{Brau1984} and rings with right Krull dimension \cite{Lena1973}.

The main goal of this text is to translate properties of a potential counterexample for K\"othe's Conjecture to a certain structural question about simple self-injective Von Neumann regular rings. Our main result, Theorem~\ref{PhD3_MainResult_IIf_III}, shows that existence of a ring satisfying (NK) implies existence of a countable local ring satisfying (NK) which is a subring of a self-injective simple VNR ring either Type $\II_f$ or Type $\III$. 

As it was said, all rings in this paper are supposed to be associative with unit. An ideal means a two-sided ideal and $C$-algebra is any ring $R$ with a subring $C$ contained in the center of $R$. A (right) ideal $I$ is called \emph{nil} whenever all elements $a\in I$ are nilpotent, i.e. there exists $n$ such that $a^n=0$. A ring $R$ is said to be {\it Von Neumann regular} (VNR) if for every $x\in R$ there exists $y\in R$ such that $x=xyx$. A VNR ring is called {\it abelian regular} provided all its idempotents are central.
the Jacobson ideal of a ring $R$ will be denoted by $J(R)$.

For non-explain terminology we refer to \cite{Good1979_3} and \cite{Sten1975}.

\section{Algebras over $\Q$ and $\Z_p$}

Let $R$ be a ring, $a\in R$, and $I$ an ideal. The element $a$  is said to be \emph{nilpotent modulo} $I$ provided there exists $n$ such that $a^n\in I$, and $a$ is \emph{principally nilpotent} provided that the right ideal $aR$ is nil. Note that then also the left ideal generated by $a$ is nil, $a$ is an element of the Jacobson ideal and each $\alpha a \beta \in R$ is principally nilpotent as well for every $\alpha, \beta \in R$. We say that $a$ is a \emph{minimal non-nilpotent} element of $R$ if $a$ is not nilpotent and $a$ is nilpotent modulo $J$ for every non-zero ideal $J$. It is easy to see that a ring satisfies (NK) if there exist two principally nilpotent elements whose sum is not nilpotent.

\begin{exm}\label{commutative example}
For every non-nilpotent element $a$ of a commutative ring $R$ there exists a prime ideal $P$, which is exactly a maximal ideal disjoint with the set $\{a^n\mid n\in\mathbb N\}$, such that $a+P$ is a minimal non-nilpotent element of the commutative domain $R/P$.
\end{exm}

First, we make two elementary observations on generators of algebras satisfying (NK) and on the existence of minimal non-nilpotent elements:

\begin{lm}\label{PhD3_ReductionStep1} 
	Let $R$ be a $C$-algebra satisfying the condition (NK). Then there exists a $C$-subalgebra $S$ of $R$ generated by two elements $x,y$ such that both $x$ and $y$ are principally nilpotent in $S$, but $x+y$ is not nilpotent. 
\end{lm}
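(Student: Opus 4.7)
The plan is to unpack the definition of (NK) at the level of elements rather than ideals, and observe that principal nilpotency is inherited by subalgebras for free.

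First I would apply the assumption that $R$ satisfies (NK): there exist nil right ideals $I,J\subseteq R$ whose sum $I+J$ is not nil. Hence there is a non-nilpotent element $z\in I+J$, which I would decompose as $z=x+y$ with $x\in I$ and $y\in J$. Since $I$ is a nil right ideal, $xR\subseteq I$ consists of nilpotent elements, so $x$ is principally nilpotent in $R$; symmetrically for $y$.

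Next I would let $S$ be the $C$-subalgebra of $R$ generated by $\{x,y\}$. To verify that $x$ is principally nilpotent in $S$ I need $xS$ to be nil, but this is immediate from the inclusion $xS\subseteq xR$, and $xR$ is already nil by the previous step; the same applies to $y$. Finally, the element $x+y=z$ is not nilpotent in $R$, and nilpotency is an absolute property of an element, so $x+y$ is not nilpotent in $S$ either.

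The statement involves no genuine obstacle: once one passes from the ideal-theoretic formulation of (NK) to two concrete element witnesses, the conclusion follows since principal nilpotency of $a$ (i.e.\ nilness of $aR$) trivially descends to any subring containing $a$. The only point worth stating carefully is that $S$ is \emph{generated} by $x$ and $y$ over $C$, so that any later step which needs a two-generator $C$-algebra can invoke this reduction.
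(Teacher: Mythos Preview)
Your argument is correct and mirrors the paper's own proof almost verbatim: pick $x\in K$, $y\in L$ with $x+y$ not nilpotent, let $S$ be the $C$-subalgebra they generate, and observe $xS\subseteq xR$ and $yS\subseteq yR$ are nil. The only difference is that you spell out explicitly why $xR$ is nil and why non-nilpotency descends to $S$, which the paper leaves implicit.
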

\begin{proof} 
Since there exist two nil right ideals $K$ and $L$ of $R$ such that $K+L$ is not nil, there exists elements $x \in K$ and $y\in L$ such that $x+y$ is not nilpotent. If $S$ denotes a $C$-subalgebra of $R$ generated by $\{x,y\}$, then $xS$ and $yS$ are nil right ideals, as $xS\subseteq xR$ and  $yS\subseteq yR$. 
\end{proof}

The classical commutative construction of Example~\ref{commutative example} can be easily generalized in non-commutative setting. 

\begin{lm}\label{PhD3_MinimalNonnilpotentPrimeFactor} 
	Let $R$ be a ring and $a\in R$. If $a$ is not nilpotent, then there exists a prime ideal $I$ such that $a+I$ is a minimal non-nilpotent element of the factor ring $R/I$. 
\end{lm}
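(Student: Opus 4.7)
The plan is a standard Zorn's lemma argument on ideals avoiding the powers of $a$, mirroring the commutative version in Example~\ref{commutative example} but using m-systems in place of multiplicative sets. Let $\mathcal{F}$ denote the collection of all (two-sided) ideals $I$ of $R$ with $a^n \notin I$ for every $n \geq 1$. By hypothesis $a$ is not nilpotent, so $0 \in \mathcal{F}$ and $\mathcal{F}$ is nonempty. The union of any chain in $\mathcal{F}$ is again an ideal of $R$ disjoint from $\{a^n : n \geq 1\}$, hence lies in $\mathcal{F}$, so Zorn's lemma yields an ideal $I \in \mathcal{F}$ maximal with respect to inclusion.

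The core step is to verify that $I$ is prime. Suppose for contradiction that $J_1, J_2$ are ideals of $R$ with $J_1 J_2 \subseteq I$ but $J_1 \not\subseteq I$ and $J_2 \not\subseteq I$. Then $I + J_1$ and $I + J_2$ strictly contain $I$, so by maximality they fall outside $\mathcal{F}$; there exist $m,n \geq 1$ with $a^m \in I + J_1$ and $a^n \in I + J_2$. Multiplying gives
\[
a^{m+n} = a^m a^n \in (I+J_1)(I+J_2) \subseteq I + J_1 J_2 \subseteq I,
\]
contradicting $I \in \mathcal{F}$. Hence $I$ is prime.

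Finally, I verify that $\bar a := a + I$ is minimal non-nilpotent in $R/I$. Since $a^n \notin I$ for every $n$, the element $\bar a$ is not nilpotent. Any non-zero ideal of $R/I$ has the form $I'/I$ for some ideal $I' \supsetneq I$ of $R$; by maximality of $I$ in $\mathcal{F}$, we have $I' \notin \mathcal{F}$, so $a^n \in I'$ for some $n$, which means $\bar a^{\,n} \in I'/I$. Thus $\bar a$ is nilpotent modulo every non-zero ideal of $R/I$, completing the proof.

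The only subtle point is the primality verification; the rest is bookkeeping. The argument works verbatim in the non-commutative setting because $\{a^n : n \geq 1\}$ is an m-system (indeed $a^m \cdot 1 \cdot a^n = a^{m+n}$), and the computation $(I+J_1)(I+J_2) \subseteq I + J_1 J_2$ is exactly what one needs to turn maximality into primality.
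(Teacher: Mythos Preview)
Your proof is correct and follows essentially the same approach as the paper: both take $I$ maximal among ideals avoiding $\{a^n\}$ via Zorn's Lemma, verify primality by producing $a^{m+n}$ inside any product of strictly larger ideals, and deduce minimal non-nilpotence from maximality. The only cosmetic difference is that the paper phrases primality via ``$U,V\supsetneq I \Rightarrow UV\not\subseteq I$'' whereas you phrase it contrapositively with $I+J_1$, $I+J_2$.
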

\begin{proof} 
Let us take a maximal ideal $I$ that does not contain any power of $a$, which exists by Zorn's Lemma. Obviously $a$ is not nilpotent modulo $I$ and it is nilpotent modulo $J$ for every ideal $J\supset I$. Since for any ideals $U$ and $V$ such that $I\subset U,V$ 
there exist $m,n \in \Nn$ satisfying $a^m\in U$ and $a^n\in V$, hence $a^{m+n}\in UV$. Now, it is clear that $UV \not \subseteq I$, thus $I$ is a prime ideal.
\end{proof}

As the consequence we can easily see that, if the class of all algebras over a field satisfying the (NK) condition is non-empty, then we can choose countably-dimensional one: 

\begin{cor} 
If $F$ is a field and $R$ is an $F$-algebra which satisfies (NK), then there exists an $F$-subalgebra $A$ of $R$ satisfying (NK) such that $\dim_F(A) \le \omega$.
\end{cor}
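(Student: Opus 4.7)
The plan is to invoke Lemma~\ref{PhD3_ReductionStep1} in the case $C = F$ and observe that the resulting subalgebra automatically has countable dimension.

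First I would apply Lemma~\ref{PhD3_ReductionStep1} to obtain an $F$-subalgebra $A \subseteq R$, generated by two elements $x, y$, in which both $x$ and $y$ are principally nilpotent while $x + y$ is not nilpotent. To verify that $A$ satisfies (NK), note that $xA$ and $yA$ are nil right ideals of $A$ by construction, and since $A$ contains the unit of $R$ we have $x + y = x\cdot 1 + y\cdot 1 \in xA + yA$; hence $xA + yA$ contains the non-nilpotent element $x+y$ and is therefore not nil.

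For the dimension bound, I would use the fact that $A$, being generated as an $F$-algebra by the two elements $x, y$, is a homomorphic image of the free associative $F$-algebra $\algF$. A basis of $\algF$ is given by the set of all words in the letters $x, y$ (including the empty word), which is a countable set. Consequently $\dim_F(\algF) = \omega$, and passing to the quotient yields $\dim_F(A) \le \omega$.

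There is no real obstacle here; the only point worth emphasising is the observation that the reduction in Lemma~\ref{PhD3_ReductionStep1} already produces a two-generated subalgebra, which automatically forces countable dimension over $F$ by the free-algebra argument above. The statement is essentially a direct corollary of Lemma~\ref{PhD3_ReductionStep1} together with the count of words on two letters.
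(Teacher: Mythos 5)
Your proof is correct and follows exactly the route the paper intends: the corollary is stated as an immediate consequence of Lemma~\ref{PhD3_ReductionStep1}, and your argument — take the two-generated $F$-subalgebra, note that $xA+yA$ contains the non-nilpotent element $x+y$, and bound the dimension by the countably many words in the free algebra $\algF$ — is precisely that consequence spelled out. No gaps.
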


Recall that the fact that the Jacobson radical of a countably-dimensional algebra over an uncountable field is nil implies that there is no $F$-algebra satisfying (NK) over uncountable field $F$  \cite[Corollary 4]{Amit1956}. It implies that for search of algebras over a field satisfying (NK) we can restrict our attention just to countable fields and to algebras of countable cardinality. Nevertheless, we will show below that it is enough to research existence of general rings satisfying (NK) in the class of countable generated algebras either over the field of rational numbers or over field $\Z_p$ for a prime number $p$.

\begin{lm}\label{PhD3_ReductionStep2} 
 Let $R$ be a $C$-algebra generated by two principally nilpotent elements $x,y$ such that $x+y$ is not nilpotent. Then there exits a prime ring $S$ satisfying (NK) and a surjective ring homomorphism $\pi: R \to S$ such that $\pi(C)$ is a commutative domain, $S$ is a $\pi(C)$-algebra generated by principally nilpotent elements $\pi(x), \pi(y)$ contained in $\J(S)$, and $\pi(x)+\pi(y)$ is a minimal non-nilpotent element.
\end{lm}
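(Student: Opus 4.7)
The plan is to apply Lemma~\ref{PhD3_MinimalNonnilpotentPrimeFactor} directly to the element $a = x+y \in R$, and then verify that the resulting factor ring inherits every property required.

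First, since $x+y$ is not nilpotent by hypothesis, Lemma~\ref{PhD3_MinimalNonnilpotentPrimeFactor} produces a prime ideal $I \subseteq R$ such that the image of $x+y$ in $S := R/I$ is a minimal non-nilpotent element. I would take $\pi : R \to S$ to be the canonical projection, which is by construction surjective and has $S$ prime. Because $R$ is generated as a $C$-algebra by $x, y$ and $\pi$ is surjective, $S$ is generated as a $\pi(C)$-algebra by $\pi(x), \pi(y)$; moreover, $\pi(C)$ is commutative and central in $S$ since $C$ is central in $R$.

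Next I would check the principal nilpotence conditions. From $xR$ and $yR$ being nil and $\pi$ being a surjective ring map, the right ideals $\pi(x)S = \pi(xR)$ and $\pi(y)S = \pi(yR)$ are nil, so $\pi(x)$ and $\pi(y)$ are principally nilpotent in $S$; by the remark preceding Example~\ref{commutative example}, they lie in $J(S)$. Since $\pi(x) + \pi(y) = \pi(x+y)$ is minimal non-nilpotent in $S$, in particular it is not nilpotent, so the nil right ideals $\pi(x)S$ and $\pi(y)S$ have non-nil sum: $S$ satisfies (NK).

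The only slightly non-routine point is verifying that $\pi(C)$ is a commutative \emph{domain}, as opposed to merely a commutative ring. This will follow from primeness of $S$ together with the fact that $\pi(C)$ sits inside the center of $S$: if $\pi(c), \pi(d) \in \pi(C)$ satisfy $\pi(c)\pi(d) = 0$, then centrality gives $\pi(c)\, S\, \pi(d) = S\,\pi(c)\pi(d) = 0$, so primeness forces $\pi(c) = 0$ or $\pi(d) = 0$. This is the main (and essentially only) observation beyond the direct application of the earlier lemma; the rest is bookkeeping.
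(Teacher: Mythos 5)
Your proof is correct and follows essentially the same route as the paper: apply Lemma~\ref{PhD3_MinimalNonnilpotentPrimeFactor} to $x+y$, pass to the prime factor $S=R/I$, and check that nil right ideals and minimal non-nilpotence are preserved under the projection. Your explicit verification that $\pi(C)$ is a domain via centrality and the element-wise characterization of primeness is a welcome detail that the paper only asserts.
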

\begin{proof}
Let $I$ be a prime ideal such that $x+y+I$ is a minimal non-nilpotent element of the factor ring $R/I$ which exists by Lemma~\ref{PhD3_MinimalNonnilpotentPrimeFactor}. Put $S=R/I$ and denote by $\pi$ the canonical projection $R \to S$. Then $\pi(C) \cong C/C\cap I$ is a commutative domain and $\pi(x), \pi(y)$ generates $S$ as a $\pi(C)$-algebra. Obviously, $\pi(x)+\pi(y)=x+y+I$ is a minimal non-nilpotent by the construction and homomorphic images of nil right ideals $\pi(xR)=\pi(x)S$, $\pi(yR)=\pi(y)S$ are nil.
\end{proof}

Given a ring $R$, we denote by $C_R$ the subring generated by the unit of $R$. Since $C_R$ is isomorphic either to $\mathbb Z$ or to $\mathbb Z_n$ for an integer $n \in \Nn$ and $R$ has the natural structure of a $C_R$-algebra, we obtain the following immediate consequence of the Lemmas~\ref{PhD3_ReductionStep1}, \ref{PhD3_ReductionStep2}:

\begin{cor} 
If there exists a ring satisfying (NK), then there exists an algebra over either $\Z$ or $\Z_p$ for some prime number $p \in \Nn$ generated by two principally nilpotent elements which satisfies (NK).
\end{cor}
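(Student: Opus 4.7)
My plan is simply to chain together the two reduction lemmas and read off the coefficient ring at the end.

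Suppose a ring $R$ satisfies (NK). View $R$ as a $C_R$-algebra, where $C_R$ is the subring generated by $1_R$, so $C_R$ is either $\Z$ or $\Z_n$ for some $n\in\Nn$. First I apply Lemma~\ref{PhD3_ReductionStep1} with $C=C_R$ to obtain a $C_R$-subalgebra $R'\subseteq R$ generated by two principally nilpotent elements $x',y'$ with $x'+y'$ not nilpotent; in particular $R'$ still satisfies (NK). Next I apply Lemma~\ref{PhD3_ReductionStep2} to the $C_R$-algebra $R'$ to produce a surjection $\pi\colon R'\to S$ onto a prime ring satisfying (NK), in which $\pi(x'),\pi(y')$ are principally nilpotent generators of $S$ as a $\pi(C_R)$-algebra and $\pi(C_R)$ is a commutative domain.

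It remains to identify $\pi(C_R)$. Since $\pi(C_R)\cong C_R/(C_R\cap\ker\pi)$ is a commutative domain, the intersection $C_R\cap\ker\pi$ is a prime ideal of $C_R$. If $C_R\cong\Z$, then the prime ideals are $(0)$ and $(p)$ for primes $p$, so $\pi(C_R)$ is isomorphic to $\Z$ or to $\Z_p$. If $C_R\cong\Z_n$, then the prime quotients are exactly the fields $\Z_p$ with $p$ a prime divisor of $n$, so again $\pi(C_R)\cong\Z_p$. In every case the prime ring $S$ is an algebra over $\Z$ or over $\Z_p$ for some prime $p$, generated by the two principally nilpotent elements $\pi(x'),\pi(y')$, and it satisfies (NK) because $\pi(x'S),\pi(y'S)$ are nil right ideals whose sum contains the non-nilpotent element $\pi(x')+\pi(y')$.

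There is no real obstacle here beyond bookkeeping: both nontrivial inputs (restricting to a two-generator subalgebra while preserving principal nilpotence, and passing to a prime quotient along a suitable prime ideal) have already been packaged in Lemmas~\ref{PhD3_ReductionStep1} and~\ref{PhD3_ReductionStep2}. The only verification the corollary itself requires is the elementary classification of the prime quotients of $\Z$ and $\Z_n$, which forces $\pi(C_R)\in\{\Z\}\cup\{\Z_p:p\text{ prime}\}$.
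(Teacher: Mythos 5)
Your proof is correct and follows exactly the route the paper intends: the paper presents this corollary as an immediate consequence of Lemmas~\ref{PhD3_ReductionStep1} and~\ref{PhD3_ReductionStep2} applied to the $C_R$-algebra structure, and you simply fill in the (elementary) identification of the domain quotients of $\Z$ and $\Z_n$. The only cosmetic slip is writing $\pi(x'S)$ where you mean $\pi(x'R')=\pi(x')S$, but this does not affect the argument since Lemma~\ref{PhD3_ReductionStep2} already asserts that $S$ satisfies (NK).
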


Before we formulate claim that we can deal with $\Q$-algebras instead of $\Z$-algebras we make the following easy observation:

\begin{lm}\label{PhD3_2PrincNilpotentGeneratedFieldAlgebraNKLocal} 
Let $F$ be a field and $R$ be an $F$-algebra generated by two principally nilpotent elements $x,y$ such that $x+y$ is not nilpotent. 
Then $R$ is a local ring satisfying (NK) with $\J(R)=xR+yR=Rx+Ry$ .
\end{lm}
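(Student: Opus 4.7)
The plan is to identify $J(R)$ with $xR+yR$ (and with $Rx+Ry$) by showing this right ideal is a maximal ideal via the decomposition $R=F\cdot 1+xR+yR$, and then to read off locality from the fact that the residue ring is $F$.

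First, I would record the easy observations. Because $x$ and $y$ are principally nilpotent, $xR$ and $yR$ are nil right ideals, so $x,y\in J(R)$; consequently $xR+yR\subseteq J(R)$ and $Rx+Ry\subseteq J(R)$. Since $x+y\in xR+yR$ and $x+y$ is non-nilpotent, the right ideal $xR+yR$ is not nil, so the pair $(xR,yR)$ witnesses that $R$ satisfies (NK).

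Next, the key structural fact: since $F\subseteq Z(R)$ and $R$ is generated by $x,y$ as an $F$-algebra, every element of $R$ is an $F$-linear combination of monomials in $x,y$. Each non-constant monomial begins with $x$ or $y$, hence lies in $xR+yR$; similarly it ends in $x$ or $y$, hence lies in $Rx+Ry$. Therefore
\[
R=F\cdot 1+xR+yR=F\cdot 1+Rx+Ry.
\]
Because $xR+yR\subseteq J(R)\subsetneq R$, this right ideal is proper, and the composition $F\hookrightarrow R\twoheadrightarrow R/(xR+yR)$ is a surjective ring homomorphism from a field onto a nonzero ring, hence an isomorphism. In particular $xR+yR$ is a two-sided ideal (its quotient is a ring) and is a maximal right ideal of $R$.

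From here locality is immediate. Chaining $xR+yR\subseteq J(R)\subsetneq R$ and using maximality of $xR+yR$ as a right ideal forces $J(R)=xR+yR$, and then $R/J(R)\cong F$ is a division ring, so $R$ is local. The identical argument applied to the left ideal $Rx+Ry$ (which is contained in $J(R)$ and satisfies $R=F\cdot 1+Rx+Ry$) yields $Rx+Ry=J(R)$ as well. The only genuinely delicate point is the very first step where one needs to know $J(R)$ is a proper ideal of $R$, which is where the unital hypothesis is used; everything else is bookkeeping with the decomposition $R=F+xR+yR$.
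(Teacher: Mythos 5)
Your proof is correct and follows essentially the same route as the paper: both establish $xR+yR\subseteq \J(R)$ from principal nilpotence, use the decomposition $R=F+xR+yR=F+Rx+Ry$ to get $R/(xR+yR)\cong F$, and conclude $\J(R)=xR+yR=Rx+Ry$ with $R$ local. Your write-up simply spells out the monomial argument and the (NK) witness in more detail than the paper does.
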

\begin{proof}
Since $x$ and $y$ are principally nilpotent, $xR+yR\subseteq \J(R)$ and 
$Rx+Ry\subseteq \J(R)$. Observe that $R=F+xR+yR=F+Rx+Ry$ which implies $R/(xR+yR)\cong F$, hence $\J(R)\subseteq xR+yR$ and $\J(R)\subseteq Rx+Ry$.
\end{proof}

\begin{prop}\label{PhD3_NKReductionFinal} 
Let $R$ be a ring satisfying (NK). Then there exists a subring $S$ of $R$ generated by two elements $\xi$, $\upsilon$, a prime $F$-algebra $A$ and an epimorphism of rings $\varphi: S \to A$ such that the following conditions hold for elements $x=\varphi(\xi)$ and 
$y=\varphi(\upsilon)$:
\begin{enumerate}
\item[(K1)] either $F=\Q$ or $F=\Z_p$ where $p$ is a prime number,
\item[(K2)] $x$ and $y$ are principally nilpotent generators of $A$, and $x+y$ is a minimal non-nilpotent element,
\item[(K3)] $x A + y A=\J(A)$ is the unique maximal right (left) ideal of $A$.
\end{enumerate}
The $F$-algebra $A$ satisfies (NK) and if $R$ is a PI-algebra then $A$ can be taken as a PI-algebra.
\end{prop}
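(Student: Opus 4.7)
The plan is to chain Lemmas~\ref{PhD3_ReductionStep1} and~\ref{PhD3_ReductionStep2} applied to $R$, and then, in the characteristic-zero case, to pass from a $\Z$-algebra to a $\Q$-algebra by a central localization.

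First I would apply Lemma~\ref{PhD3_ReductionStep1} with $C=C_R$ to extract a subring $S\subseteq R$ generated by two elements $\xi,\upsilon$ that are principally nilpotent in $S$ with $\xi+\upsilon$ not nilpotent. Applying Lemma~\ref{PhD3_ReductionStep2} to $S$ then yields a prime ring $T$, a surjection $\pi\colon S\to T$, and principally nilpotent generators $\pi(\xi),\pi(\upsilon)$ of $T$ over the commutative domain $D:=\pi(C_R)$, with $\pi(\xi)+\pi(\upsilon)$ a minimal non-nilpotent element of $T$. Since $C_R\cong\Z$ or $C_R\cong\Z_n$ and $D$ is a commutative domain image of $C_R$, one has $D\cong\Z$ or $D\cong\Z_p$ for some prime $p$.

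If $D\cong\Z_p$, I would simply set $F=\Z_p$, $A=T$, $\varphi=\pi$, and the required properties of $T$ deliver (K1) and (K2) directly. Otherwise $D\cong\Z$, and I would form the central localization $A:=T[\pi(\Z)\setminus\{0\}]^{-1}$, set $F=\Q$, and let $\varphi\colon S\to A$ be $\pi$ followed by the canonical embedding $T\hookrightarrow A$. The localization is well defined because every nonzero central element $c$ of the prime ring $T$ is regular: if $ct=0$, then $cTt=Tct=0$, and primeness forces $t=0$. Hence $T$ embeds in $A$, and $\varphi$ is an epimorphism of rings (central localizations are ring epimorphisms). In either case $x=\varphi(\xi)$ and $y=\varphi(\upsilon)$ generate $A$ over $F$.

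The technical core is transferring the properties of $T$ to $A$ in the $\Q$-case. Principal nilpotency of $x,y$ follows from $(xr/c)^n=(xr)^n/c^n$ for $r\in T$, $c\in\Z\setminus\{0\}$. Primeness of $A$ and minimal non-nilpotency of $x+y$ I would both handle by contraction: any nonzero ideal $J\subseteq A$ contains an element of the form $t/c$ with $t\ne 0$, so $t\in J\cap T$ is a nonzero ideal of $T$; this gives primeness of $A$ by contradiction, and yields $(x+y)^n\in J\cap T\subseteq J$ by minimality in $T$. Non-nilpotency of $x+y$ in $A$ is clear since $T\hookrightarrow A$. Once (K1) and (K2) are established, Lemma~\ref{PhD3_2PrincNilpotentGeneratedFieldAlgebraNKLocal} applied to $A$ yields $\J(A)=xA+yA=Ax+Ay$, and since $A/\J(A)\cong F$ is a field this is the unique maximal right (left) ideal, settling (K3). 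The (NK) property for $A$ is immediate because $xA,yA$ are nil and $x+y$ is not nilpotent; the PI-transfer is equally clean, as subrings, factor rings, and central localizations all satisfy the same polynomial identity. The main obstacle I anticipate is the characteristic-zero step: one must justify the central localization and check that primeness, principal nilpotency, and minimal non-nilpotency each pass from $T$ to $A$, all of which reduce to the single fact that nonzero central elements of a prime ring are regular.
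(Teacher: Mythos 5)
Your proof is correct, but it routes the characteristic-zero case differently from the paper, and the difference is worth noting. The paper performs its torsion dichotomy on the subring $S$ itself \emph{before} it has a prime ring in hand: when some $\ker(\mu_p)\ne 0$ it must manufacture new elements $x'=\xi(\xi+\upsilon)^n$, $y'=\upsilon(\xi+\upsilon)^n$ killed by $p$ and pass to $S'/pS'$, and in the torsion-free case it realizes the $\Q$-algebra concretely as the $\Q$-subalgebra of $\End_\Z(E(S_\Z))$ generated by the left-multiplication operators, only afterwards taking a prime quotient via Lemma~\ref{PhD3_ReductionStep2}. You instead apply Lemma~\ref{PhD3_ReductionStep2} first, and then exploit the fact that in the resulting prime ring $T$ every nonzero central element is regular: this collapses the dichotomy to ``$\operatorname{char}T=p$'' versus ``$T$ is torsion-free,'' eliminating the paper's mixed-torsion subcase and its ad hoc elements $x',y'$, and in characteristic zero it lets you form $A=T\otimes_\Z\Q$ as an abstract central localization rather than inside an endomorphism ring. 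The two constructions produce essentially the same object in the end (the paper's $A\cong\Q\otimes\overline{A}$ \emph{is} the central localization), but your order of operations (prime quotient first, then localize) is cleaner and all the transfers you must check --- primeness, principal nilpotency, and minimal non-nilpotency passing from $T$ to $A$, and the PI identity surviving a central localization via multilinearization --- reduce, as you say, to the single regularity fact; the remaining steps (the categorical epimorphism property of $S\twoheadrightarrow T\hookrightarrow A$, and (K3) via Lemma~\ref{PhD3_2PrincNilpotentGeneratedFieldAlgebraNKLocal}) match the paper. The only cosmetic caveat is that you should state explicitly that $J\cap T$ (not the element $t$) is the nonzero ideal of $T$ used in the contraction argument, but the intent is unambiguous.
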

\begin{proof} Since $R$ has a structure of $C_R$-algebra, we can take a subring $S$ generated by principally nilpotent elements $\xi$, $\upsilon$ such that $\xi+\upsilon$ is not nilpotent which is ensured by Lemma~\ref{PhD3_ReductionStep1}. Moreover, $C_R=C_S$ and   we may suppose that $\xi+\upsilon$ is a minimal non-nilpotent element and $C_S\cong \Z$ or $C_S\cong \Z_p$ by Lemma~\ref{PhD3_2PrincNilpotentGeneratedFieldAlgebraNKLocal}. For each integer $p\in \Nn$ let $\mu_p: S \to S$ be the $S$-endomorphism induced by multiplication by $p$ and note that $\ker(\mu_p)$ is an ideal for an arbitrary $p$.

First, suppose that there exists a prime number $p$ such that $\ker (\mu_p)\ne 0$. Since $\xi+\upsilon$ is minimal non-nilpotent, it follows by the hypothesis that there exists $n\in \Nn$ for which $(\xi+\upsilon)^n\in\ker(\mu_p)$, i.e. $p(\xi+\upsilon)^n=0$. Put 
\begin{equation*}
 x'=\xi(\xi+\upsilon)^n,  y'=\upsilon(\xi+\upsilon)^n\in\ker(\mu_p),
\end{equation*}
and denote by $S'$ a $C_S$-subalgebra of $S$ generated by $\{x',y' \}$. Since $px'=0=py'$ and $p$ is not an invertible element, we get that $pS'= pC_S \ne C_S$, hence $C_S/pC_S \cong \Z_p$ and we may identify $C_S/pC_S$ and $\Z_p$. Furthermore, assume that $(x'+y')^k\in pS'$ for some $k$. Then there exists $c\in C_S$ such that $(x'+y')^k=pc$ because $pS'= pC_S$, and so 
\begin{equation*}
0\ne (\xi+\upsilon)^{2k(n+1)} = (x'+y')^{2k}=(pc)^2=p(x'+y')^kc=0,
\end{equation*}
a contradiction. Hence $(x'+y')^k\notin pS'$ for all $k$. Note that $S'/pS'$ is a 
$\Z_p$-algebra (in fact $C_S/pC_S$-algebra) generated by principally nilpotent elements $x'+pS'$ and $y'+pS'$ with non-nilpotent $x'+y'+pS'$. 
So there exists a prime factor of $S'/pS'$, denote it by $A$, which is a $\Z_p$-algebra satisfying (K2) by Lemma~\ref{PhD3_ReductionStep2}. Obviously, $S'/pS'$ satisfies also (K3) by Lemma~\ref{PhD3_2PrincNilpotentGeneratedFieldAlgebraNKLocal}.
Because the naturally constructed homomorphism $\varphi: S \to A$ is a composition of surjective homomorphisms, it is an epimorphism.

Now, suppose that $\ker(\mu_p)= 0$ for all primes $p$, hence $S_\Z$ is a torsion-free abelian group. Denote by $E(S_\Z)$ the injective envelope of $S_\Z$ as a $\Z$-module. Since every endomorphism of $S_\Z$ can be extended to an endomorphism of $E(S_\Z)$ and  multiplication by each element of $S$ can be viewed as an endomorphism of $S_\Z$, there exist $x,y \in \End_\Z(E(S_\Z))$ such that $x(s)=\xi \cdot s$ and $y(s)=\upsilon\cdot s$ for every $s \in S$. As $S_\Z$ is torsion-free, $\End_\Z(E(S_\Z))$ has a natural structure of a $\Q$-algebra. Let us denote by $\overline{A}$ its $\Z$-subalgebra generated by $x$ and $y$, by $A$ its $\Q$-subalgebra generated by $x$ and $y$ and by $i:\overline{A}\to A$ the inclusion homomorphism. Note that a map $\psi:\overline{A}\to S$ defined by the rule $\psi(\alpha)=\alpha(1)$ provides correctly defined isomorphism of $\Z$-algebras such that $\psi(x)=\xi$ and $\psi(y)=\upsilon$. Furthermore, $A$ can be taken as prime by Lemma~\ref{PhD3_ReductionStep2} and it is easy to see that $i \circ \psi^{-1}:S\to A$ is a ring epimorphism.

Finally, if $r\in A$, there exists $m\in \mathbb Z\setminus\{0\}$ for which $ rm \in \overline{A}$ which implies that there is $k \in \Nn$ such that $m^k(xr)^k=(xrm)^k=0$. As $\End_\Z(E(S_\Z))$ is a torsion-free abelian group, $(xr)^k=0$ which shows that $x$ is a principal nilpotent element. The same argument applied on $y$ proves that $y$ is principal nilpotent as well. Now $A$ is $\Q$-algebra satisfying (K2) and (K3) again by Lemmas~\ref{PhD3_ReductionStep2} and \ref{PhD3_2PrincNilpotentGeneratedFieldAlgebraNKLocal}.

To prove the addendum, suppose that $R$ is a PI-ring and note that the class of PI-algebras is closed under taking subrings and factor rings. Thus $F$-algebras $A$ are PI-algebras for finite fields $F$. If $F=\Q$ we can see that $A\cong \Q \otimes \overline{A}$ which is polynomial by \cite[Theorem 6.1]{Rowe1980}.
\end{proof}

The previous claim easily allows to restrict reformulation of K\"othe's Conjecture due to Krempa \cite[Theorem 6]{Krem1972} just to fields $\Q$ and $\Z_p$:

\begin{cor} \label{PhD3_NKReductionFinal_Cor}
	K\"othe's Conjecture holds if and only if there is no countable generated $F$-algebra satisfying (NK) for either $F=\Q$ or $F=\Z_p$ where $p$ is a prime number.
\end{cor}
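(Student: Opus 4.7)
The plan is to prove this as a direct corollary of Proposition~\ref{PhD3_NKReductionFinal} together with the easy observation that the class of rings satisfying (NK) is non-empty if and only if Köthe's Conjecture fails (by the equivalent formulation quoted from \cite[10.28]{Lam2001}).

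First I would dispatch the ``only if'' direction, which is essentially a tautology: if Köthe's Conjecture holds then no (unital associative) ring satisfies (NK), and in particular no countably generated $F$-algebra over $\Q$ or $\Z_p$ does.

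For the ``if'' direction I would argue the contrapositive. Assume Köthe's Conjecture fails, so that there exists some ring $R$ satisfying (NK). Apply Proposition~\ref{PhD3_NKReductionFinal} to $R$ to obtain a prime $F$-algebra $A$, with $F=\Q$ or $F=\Z_p$ for a suitable prime $p$, such that $A$ is generated by two principally nilpotent elements $x=\varphi(\xi)$, $y=\varphi(\upsilon)$ and satisfies (NK). Since $A$ is generated as an $F$-algebra by the two elements $x, y$, it is in particular countably generated as an $F$-algebra, which gives the desired counterexample in the claimed restricted class.

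There is no real obstacle here beyond invoking the previous proposition; the only mild subtlety is to observe that ``countably generated'' is a weaker requirement than the \emph{finitely} generated $F$-algebra produced by Proposition~\ref{PhD3_NKReductionFinal}, so the reduction is immediate. Thus the corollary follows without additional work.
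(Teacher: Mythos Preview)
Your argument is correct and matches the paper's own treatment: the corollary is stated without proof as an immediate consequence of Proposition~\ref{PhD3_NKReductionFinal} (together with the (NK) reformulation of K\"othe's Conjecture), which is exactly what you do. There is nothing to add.
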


Applying an old Amitsur's result we can reprove a well-known fact that PI-rings satisfies K\"othe's Conjecture. Let us first state a general result by Braun extending previous works in \cite{Raz1974, Kem1980}.
 
\begin{fact}\cite[Theorem 5]{Brau1984}\label{PhD3_JacPINoeCommRingNilpotent} 
	The Jacobson radical of a finitely generated PI-algebra over a Noetherian commutative ring is nilpotent.
\end{fact}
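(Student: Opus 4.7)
This is a celebrated theorem of Braun, building on earlier work of Razmyslov~\cite{Raz1974} and Kemer~\cite{Kem1980}, and rather than attempt a genuinely new proof I would follow the outline of Braun's original strategy. The plan is to produce, from the finitely generated PI-algebra $R$ and its Noetherian commutative base $C$, an auxiliary commutative Noetherian ring $C'$ over which $R$ becomes a finitely generated module, and then deduce nilpotence of $J(R)$ from standard Noetherian module-theoretic arguments.

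The first step, and the technical heart of the argument, is to extract enough trace-like functions from the polynomial identity satisfied by $R$. Here one combines the Razmyslov--Formanek central polynomials, which supply nonzero central evaluations in any PI-algebra, with a Cayley--Hamilton type identity to define, for each element $r\in R$, formal scalars $t_1(r),\dots,t_k(r)$ that behave like the coefficients of the characteristic polynomial of $r$. Verifying that these traces land in (a suitable enlargement of) the center and satisfy the expected formal identities is delicate combinatorics of PI-identities; this is the step I expect to be the main obstacle, and the one I would simply quote from Braun's paper rather than re-derive.

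The second step uses Shirshov's height theorem to show that only finitely many such trace values are in fact needed: a finitely generated PI-algebra has bounded height over regular monomials in its generators, so adjoining the traces of a finite set of monomials to $C$ already suffices. Call the resulting enlargement $C'$. By Hilbert's basis theorem $C'$ is again Noetherian, and by the Cayley--Hamilton identity every generator of $R$ is integral over $C'$, so $R$ is a finitely generated $C'$-module.

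The third step is then formal: $R$ is a finite module over a commutative Noetherian ring $C'$, the Jacobson radical $J(R)$ is a finitely generated two-sided ideal, and a standard argument (Artin--Rees on the two-sided filtration by powers of $J(R)$, combined with the fact that $J(C')$ on a Noetherian module is controlled by Krull's intersection theorem) forces $J(R)^n=0$ for some $n$. All the real content lives in step one; steps two and three are essentially bookkeeping around Shirshov's theorem and commutative Noetherian module theory.
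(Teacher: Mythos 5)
The paper does not actually prove this statement: it is quoted as a Fact with a citation to Braun's Theorem~5, so there is no internal argument to compare yours against, and deferring the trace-identity combinatorics to \cite{Brau1984} is exactly what the authors do. Your sketch of Braun's strategy (Razmyslov--Formanek central polynomials and Cayley--Hamilton-type traces, Shirshov's height theorem, reduction to a module-finite extension of a Noetherian commutative ring) is a fair description of the shape of the argument in the literature, though the actual proof proceeds by induction on the PI-class using the Artin--Procesi theorem and localization at evaluations of central polynomials rather than by one global trace-ring construction; in particular an affine PI-algebra need not be representable, so the formal traces cannot be defined on all of $R$ at once, and this is precisely why the ``delicate combinatorics'' you propose to quote cannot be treated as a black box feeding directly into your steps two and three.

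The genuine gap is in your third step. If $R$ is a finitely generated module over a central Noetherian commutative subring $C'$, then $R$ is left and right Noetherian, and what follows is nilpotence of the \emph{nil} radical (Levitzki: nil one-sided ideals of a Noetherian ring are nilpotent), not of the Jacobson radical. Already $R=C'=\mathbb{Z}_{(p)}$ is module-finite over itself, Noetherian, commutative (hence PI), and has $J(R)=pR$ with $\bigcap_n J(R)^n=0$ but $J(R)^n\neq 0$ for every $n$; Artin--Rees and Krull's intersection theorem give you only the vanishing of the intersection, which is strictly weaker than nilpotence. To conclude that $J(R)$ is nilpotent one must first know that $J(R)$ is nil, and that is a separate Nullstellensatz-type ingredient (Amitsur, Procesi) valid for affine PI-algebras over a field, or more generally over a Jacobson base ring. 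Indeed, over an arbitrary Noetherian commutative base Braun's theorem is correctly stated for the nil radical; the Jacobson-radical form is the one the paper actually uses in Proposition~\ref{PhD3_NoPINK}, where the base is the field $F$, and there the missing ingredient is available. Your proposal as written omits this step entirely, and without it the final sentence of step three is false.
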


\begin{prop}\label{PhD3_NoPINK}  
	There is no PI-ring satisfying (NK).
\end{prop}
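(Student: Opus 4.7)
The plan is to argue by contradiction: suppose a PI-ring $R$ satisfies (NK). Then by the addendum of Proposition~\ref{PhD3_NKReductionFinal}, there is a prime $F$-algebra $A$ satisfying (K1)--(K3) that is itself a PI-algebra, where $F$ is either $\Q$ or $\Z_p$. The key features we extract are: (i) $A$ is generated as an $F$-algebra by the two elements $x$ and $y$, hence $A$ is finitely generated over $F$; (ii) $\J(A) = xA + yA$; and (iii) $x+y$ is a (minimal) non-nilpotent element of $A$.

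Next I would apply Fact~\ref{PhD3_JacPINoeCommRingNilpotent}. Since a field is in particular a commutative Noetherian ring, and $A$ is a finitely generated PI-algebra over $F$, the fact tells us that $\J(A)$ is nilpotent. This is the heart of the argument.

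Finally, I derive the contradiction: the element $x+y$ lies in $xA + yA = \J(A)$ by (K3), so $x+y$ is an element of a nilpotent ideal and is therefore nilpotent. This contradicts (K2), which asserts that $x+y$ is non-nilpotent (being minimal non-nilpotent). Hence no PI-ring can satisfy (NK).

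There is essentially no obstacle here beyond verifying that the reduction supplied by Proposition~\ref{PhD3_NKReductionFinal} preserves the PI property --- which the proposition's last sentence explicitly grants --- and that the hypotheses of Fact~\ref{PhD3_JacPINoeCommRingNilpotent} are met; both are immediate. The argument is thus a short direct application of the preceding reduction and Braun's theorem.
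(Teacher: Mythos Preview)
Your proof is correct and follows essentially the same route as the paper: reduce via Proposition~\ref{PhD3_NKReductionFinal} to a two-generated PI-algebra $A$ over a field with $x+y\in\J(A)$ non-nilpotent, then invoke Fact~\ref{PhD3_JacPINoeCommRingNilpotent} to force $\J(A)$ nilpotent, a contradiction. The paper phrases the contradiction as ``$\J(A)$ not nil'' rather than isolating the element $x+y$, but the argument is the same.
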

\begin{proof} 
Let us assume that $R$ is a PI-ring satisfying (NK). Then by Proposition~\ref{PhD3_NKReductionFinal} there exists a 2-generated PI-algebra $A$ over a field $F$ with the Jacobson radical of $A$ not nil, hence we get a contradiction with Fact~\ref{PhD3_JacPINoeCommRingNilpotent}.
\end{proof}

Since an algebra over a field is easily embeddable into a VNR ring, we are able to find a prime VNR extension of algebras constructed in Proposition~\ref{PhD3_NKReductionFinal}.

\begin{lm}\label{PhD3_NKEmbedCountPrimeVNR} 
	An algebra $A$ satisfying the conditions (K1)--(K3) from Proposition~\ref{PhD3_NKReductionFinal} is embeddable into a countable prime VNR ring $R$ such that $J\cap A\ne 0$ for each nonzero ideal $J$ of $R$.
\end{lm}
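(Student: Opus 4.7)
The plan is to embed $A$ into a prime right self-injective VNR ring $Q$ and then extract a countable VNR subring $R$ with $A \subseteq R \subseteq Q$ that still has $A$ essential as a right $A$-submodule; primeness of $R$ together with the intersection condition will then follow automatically from essentiality and primeness of $A$.

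First I would observe that $A$ is right nonsingular. The right singular ideal of $A$ is a two-sided ideal that cannot equal $A$, since the right annihilator of $1$ is zero, so primeness of $A$ (given in Proposition~\ref{PhD3_NKReductionFinal}) forces it to vanish. Form $Q := Q_{\max}^{r}(A)$, the maximal right ring of quotients of $A$. By Utumi--Johnson theory, cf.\ \cite{Good1979_3}, $Q$ is then a right self-injective VNR ring with $A$ essential in $Q_A$. Because $F$ is central in $A$ and every commutator $[f,q]$ with $f \in F$, $q \in Q$ annihilates the essential right ideal $A \subseteq Q_A$, right nonsingularity of $Q$ forces $F$ into the center of $Q$; hence $Q$ is an $F$-algebra.

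Next I would construct $R$ by a countable closure inside $Q$. Since $A$ is generated by two elements over the countable field $F$ from (K1), $A$ is itself countable. Put $A_0 := A$, and inductively: at stage $n$, enumerate $A_n$ and pick for each $a \in A_n$ some $b_a \in Q$ with $a b_a a = a$, available because $Q$ is VNR; let $A_{n+1}$ be the $F$-subalgebra of $Q$ generated by $A_n \cup \{b_a : a \in A_n\}$. The union $R := \bigcup_{n} A_n$ is then a countable $F$-subalgebra of $Q$ containing $A$ in which every element has a von Neumann inverse already inside $R$, so $R$ is VNR.

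Finally I would verify the intersection condition and deduce primeness of $R$. For any $0 \neq a \in R \subseteq Q$, essentiality of $A$ in $Q_A$ yields $c \in A$ with $0 \neq ac \in A$; hence every nonzero right ideal of $R$ meets $A$ nontrivially. Applied to a nonzero two-sided ideal $J$ of $R$, this gives $J \cap A \neq 0$, and since $A \subseteq R$ multiplies on both sides, $J \cap A$ is a nonzero two-sided ideal of the prime ring $A$. Consequently any two nonzero ideals $I, J$ of $R$ satisfy $(I \cap A)(J \cap A) \neq 0$, and a fortiori $IJ \neq 0$, so $R$ is prime. The only nontrivial external input is that the maximal right quotient ring of a prime right nonsingular ring is itself a prime right self-injective VNR ring with the ground field in its center, a standard application of Utumi--Johnson; the rest is a routine countable closure together with the key observation that essentiality of $A$ inside $R$ automatically promotes $R$ to a prime ring.
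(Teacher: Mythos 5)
There is a genuine gap at the very first step, and the rest of the argument depends on it. You claim that $A$ is right nonsingular because its right singular ideal is a proper two-sided ideal and ``primeness of $A$ forces it to vanish.'' Primeness only says that the product of two nonzero ideals is nonzero; it does not say that $A$ has no nonzero proper ideals (that would be simplicity). In fact the ring $A$ at hand visibly has the nonzero proper two-sided ideal $J(A)=xA+yA$ by condition (K3), so the inference pattern ``proper ideal of a prime ring, hence zero'' is refuted by $A$ itself. Prime rings with nonzero right singular ideal exist, and a two-generated prime local ring whose Jacobson radical is a large nil ideal is exactly the sort of ring where nonsingularity is doubtful and certainly not free. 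Without $Z(A)=0$ the maximal right quotient ring $Q^{r}_{\max}(A)$ need not be von Neumann regular or right self-injective, $A$ need not embed in a regular ring this way at all, and your countable closure and essentiality arguments have nothing to act on. (The subsequent steps --- the countable closure picking quasi-inverses, and the deduction that essentiality of $A$ in $Q_A$ forces every nonzero ideal of $R$ to meet $A$ and hence forces $R$ to be prime --- are fine, but only conditionally on this first step.)

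The paper avoids the issue entirely by not using a ring of quotients: it embeds $A$ into $\End_F(A)$ via the regular representation, which is a VNR $F$-algebra for trivial reasons (it is the endomorphism ring of a vector space), performs the same kind of countable closure under quasi-inverses to get a countable VNR ring $R'\supseteq A$, and then passes to $R'/M$ where $M$ is an ideal maximal with respect to $M\cap A=0$; maximality of $M$ replaces your essentiality argument in guaranteeing that every nonzero ideal of the quotient meets $A$, and primeness then follows from primeness of $A$ exactly as in your last paragraph. If you want to keep your route, you must either prove that every algebra satisfying (K1)--(K3) is right nonsingular (which does not appear to be available) or add nonsingularity as a hypothesis, which would prove a strictly weaker statement than the lemma.
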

\begin{proof} 
As $A$ is an algebra over a field $F$, there exists a canonical embedding $\nu$ of $A$ into the endomorphism ring $\End_F(A)$. Note that $\End_F(A)$ is a VNR $F$-algebra and for each $x\in\End_F(A)$ fix an element $y_x$ such that $x y_x x=x$. Now we will construct by induction a chain of subrings $R'_i\subset R'_{i+1}\subset\dots\subset \End_F(A)$. We put $R'_1 = \nu(A)$ and if $R'_i$ is defined, $R'_{i+1}$ is an $F$-subalgebra of $\End_F(A)$ generated by the set $R'_{i} \cup \{ y_x \mid x\in R'_{i} \}$. Then $R' = \bigcup_{i \in \Nn} R'_i$ is a countable VNR $F$-algebra containing $\nu(A) \simeq A$ as an $F$-subalgebra.

Fix a maximal ideal $M$ of $R'$ such that $M \cap A=0$. Then the map $a \to a+M, a \in A$ induces an embedding of $A$ into a countable VNR ring $R'/M$. We may identify $A$ with the image of the embedding. Note that if $\overline{I},\overline{J}$ are two nonzero ideals of $R'/M$, then the intersection $\overline{I}\cap A$ resp. $\overline{J}\cap A$ forms a nonzero ideal of $A$. Since $A$ is prime, $0 \ne (\overline{I}\cap A)(\overline{J}\cap A)\subset \overline{I} \overline{J}$, hence $R:=R'/M$ is prime too.
\end{proof}

We finish the section with an important observation about nil ideals due to  Herstein and Small \cite{HerSma1964}:

\begin{lm}\label{PhD3_ACCOnRannExtRing} 
If a ring $Q$ satisfies (ACC) on right and left annihilators and $R$ is a subring of $Q$, then every nil right ideal of $R$ is nilpotent.
\end{lm}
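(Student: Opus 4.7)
The plan is to reduce this to the classical Herstein--Small result that every nil subring of a ring with ACC on right and left annihilators is nilpotent. Given a nil right ideal $I$ of $R$, the inclusion $I\cdot I\subseteq I\cdot R\subseteq I$ shows $I$ is multiplicatively closed, and since every element of $I$ is nilpotent in $R$ (hence in $Q$), the set $I$ is a nil (non-unital) subring of $Q$. It therefore suffices to prove that an arbitrary nil subring $N\subseteq Q$ is nilpotent under the given ACC hypotheses on $Q$.

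Suppose for contradiction that $N^k\ne 0$ for every $k\ge 1$. First I would consider the family $\mathcal{L}=\{\operatorname{lann}_Q(a):a\ne 0,\ a\in N^t\text{ for some }t\ge 1\}$ of left annihilators in $Q$ of nonzero products of elements of $N$. By the ACC on left annihilators, $\mathcal{L}$ has a maximal element $L=\operatorname{lann}_Q(b)$ for some nonzero product $b$. For every $c\in N$ with $bc\ne 0$, the trivial inclusion $\operatorname{lann}_Q(bc)\supseteq \operatorname{lann}_Q(b)$, combined with $\operatorname{lann}_Q(bc)\in\mathcal{L}$ and maximality of $L$, forces $\operatorname{lann}_Q(bc)=L$. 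Iterating gives $\operatorname{lann}_Q(b^i)=L$ as long as $b^i\ne 0$. Since $b\in N$ is nilpotent, let $k\ge 1$ be least with $b^{k+1}=0$; then $b^k\ne 0$ and $b\cdot b^k=0$, so $b\in\operatorname{lann}_Q(b^k)=L=\operatorname{lann}_Q(b)$, i.e.\ $b^2=0$.

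Next I would restrict to the nonempty subfamily of nonzero products $a$ with $\operatorname{lann}_Q(a)=L$ and use the ACC on right annihilators in $Q$ to select within it an element $b^{\ast}$ whose right annihilator $\an_Q(b^{\ast})$ is maximal. A dual argument, using that $\an_Q(cb^{\ast})\supseteq\an_Q(b^{\ast})$ and $\operatorname{lann}_Q(cb^{\ast})\supseteq\operatorname{lann}_Q(c)$, forces $\an_Q(cb^{\ast})=\an_Q(b^{\ast})$ whenever $cb^{\ast}\ne 0$, and combined with the $b^2=0$-type relation applied to $b^{\ast}$ eventually yields $Nb^{\ast}=0$. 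But then writing $b^{\ast}=n_1(n_2\cdots n_s)$ as a nonempty product of elements of $N$, the identity $n_1 b^{\ast}=0$ contradicts the choice that $b^{\ast}$ was a nonzero product with the assumed maximality properties, and we are done.

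The main obstacle is the simultaneous handling of both chain conditions---a single-sided ACC on annihilators is known to be insufficient for this conclusion---and verifying that the two maximality selections remain mutually compatible when one passes from the first to the second. This compatibility is the central technical point of the Herstein--Small argument, and is the reason the hypothesis is imposed on $Q$ rather than on $R$ itself (so that nil right ideals of merely a subring can be controlled).
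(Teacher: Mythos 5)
Your first paragraph reproduces the paper's proof exactly: a nil right ideal $I$ of $R$ is multiplicatively closed, hence a nil subring of $Q$ as a ring without unit, and the statement then follows from Herstein--Small's Theorem~1 (every nil subring of a ring with ACC on right and left annihilators is nilpotent). The paper stops there and cites \cite[Theorem 1]{HerSma1964}; had you done the same, the argument would be complete.

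The problem is that you then try to sketch a proof of the Herstein--Small theorem itself, and the sketch has genuine gaps. First, in your third paragraph the subfamily of nonzero products $a$ with $\operatorname{lann}_Q(a)=L$ is not stable under left multiplication by $N$: for $c\in N$ one only gets $\operatorname{lann}_Q(cb^{\ast})\supseteq\operatorname{lann}_Q(c)$, not $\operatorname{lann}_Q(cb^{\ast})\supseteq L$, so $cb^{\ast}$ need not belong to the subfamily and the maximality of $\an_Q(b^{\ast})$ inside it says nothing about $\an_Q(cb^{\ast})$; the promised ``dual argument'' yielding $Nb^{\ast}=0$ is therefore not available. Second, and more fundamentally, even granting $Nb^{\ast}=0$ for one nonzero product $b^{\ast}$, this is not a contradiction: $n_1b^{\ast}=n_1^2n_2\cdots n_s$ is a different element from $b^{\ast}$, and a non-nilpotent nil ring may well contain nonzero elements annihilated by the whole ring. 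Nilpotence of $N$ means that \emph{all} products of some fixed length vanish, and no single distinguished element can certify that. What your second paragraph actually establishes is (after pushing the same computation one step further, from $b^2=0$ to $bcb=0$ for all $c\in N$) that $b$ generates a nonzero ideal of $N$ with square zero --- this is essentially Herstein--Small's Lemma~1, which needs only one chain condition. The real content of their Theorem~1 is the passage from ``every nonzero nil subring contains a nonzero nilpotent ideal'' to ``every nil subring is nilpotent,'' which is where the two chain conditions interact (one shows that the sum of the nilpotent ideals of $N$ is nilpotent and exhausts $N$); that step is missing entirely from your proposal.
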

\begin{proof} 
Since any nil right ideal of $R$ forms a nil subring of $Q$ as a ring without unit, the result
follows immediately from  in \cite[Theorem 1]{HerSma1964}.
\end{proof}

\section{Self-injective VNR rings}

Before we present a construction of simple self-injective VNR rings containing rings satisfying (NK), we need to recall several notions and structural results concerning self-injective VNR rings. 

Let $R$ be a VNR ring. An idempotent $e$ is called \emph{abelian} if the ring $eRe$ is abelian regular and $e$ is \emph{directly finite} if the ring $eRe$ is directly finite \cite[p.110]{Good1979_3}, i.e. $xy = 1$ implies $yx = 1$ for all $x,y \in eRe$. A self-injective VNR ring $R$ is \emph{purely infinite} if it contains no nonzero directly finite central idempotent and it is
\begin{itemize}
\item \emph{Type I} if every nonzero right ideal contains a nonzero abelian idempotent \cite[10.4]{Good1979_3},
\item \emph{Type II} if every nonzero right ideal contains a nonzero directly finite idempotent \cite[10.8]{Good1979_3},
\item \emph{Type III} if it contains no nonzero directly finite idempotent.
\end{itemize}
Moreover, $R$ is Type $\I_f$ (Type $\II_f$) provided it is Type $\I$ (Type $\II$) and directly finite. Next, $R$ is Type $\I_\infty$ (Type $\II_\infty$) if it is Type $\I$ (Type $\II$) and purely infinite. Recall the structural decomposition of self-injective VNR rings: 

\begin{fact}\cite[Theorem 10.22]{Good1979_3}\label{PhD3_SelfInjVNRDecomposition}
Every self-injective VNR ring is uniquely a direct product of rings Type $\Tc=\I_f$, $\I_\infty$, $\II_f$, $\II_\infty$, and $\III$. In particular, each prime self-injective VNR ring is exactly one type from $\Tc$.
\end{fact}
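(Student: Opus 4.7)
The plan is to exploit the fact that the Boolean algebra $B(R)$ of central idempotents of a self-injective VNR ring is complete: every orthogonal family of central idempotents admits a supremum, which is a standard consequence of self-injectivity. Given this, the five types can be isolated one at a time as corner rings $eR$ for appropriate central idempotents $e$.

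First I would form $e_{\I}:=\bigvee\{e\in B(R)\mid eR\text{ is Type I}\}$ and verify that $e_{\I}R$ is itself Type I. Inside $(1-e_{\I})R$ the analogous construction yields a central idempotent $e_{\II}$ so that $e_{\II}R$ is Type II, and the complement $e_{\III}:=1-e_{\I}-e_{\II}$ then supports a corner ring containing no nonzero directly finite idempotent, hence Type III. Within $e_{\I}R$ (respectively $e_{\II}R$) I would next extract the largest central idempotent whose corner is directly finite; that corner is Type $\I_f$ (resp.\ $\II_f$), and its complement in $e_{\I}$ (resp.\ $e_{\II}$) is purely infinite by definition, giving Type $\I_\infty$ (resp.\ $\II_\infty$). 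Uniqueness of the resulting decomposition follows because each of the five defining properties is intrinsic to the corresponding summand, so the central idempotent supporting each type is forced.

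The addendum on prime rings is then immediate: if $R$ is prime and $0\ne e\ne 1$ is a central idempotent, then $eR\cdot(1-e)R=0$ with both factors nonzero ideals, contradicting primeness. Hence $B(R)=\{0,1\}$, the product decomposition collapses, and exactly one of the five factors is nonzero. The main obstacle would be the lifting-through-sups step: one must verify that if a family $\{f_\alpha\}$ of orthogonal central idempotents has each $f_\alpha R$ of Type I (respectively of one of the other four types), then so does $(\bigvee f_\alpha)R$. This depends on a delicate use of self-injectivity to piece together idempotents from the corners, and is precisely where Goodearl's Chapter~10 machinery does the real work; I would invoke it as a black box rather than reprove it.
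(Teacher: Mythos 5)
This statement appears in the paper only as a quoted Fact, cited to Goodearl's Theorem 10.22 without proof, and your sketch reproduces the standard argument of that reference: isolate each type by the largest central idempotent in the complete Boolean algebra of central idempotents of a self-injective VNR ring, split each of Types I and II into its directly finite and purely infinite parts, and reduce the prime case to the observation that a prime ring has no nontrivial central idempotents. Your outline is correct, and deferring the genuinely hard steps (that the corner of a supremum of central idempotents inherits the type, and that a nonzero abelian or directly finite idempotent forces a nonzero central idempotent of the corresponding type) to Goodearl's Chapter 10 is appropriate for a result the paper itself only cites.
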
  

Let $R$ be a right self-injective VNR ring. Define a function $\mu$ on non-singular injective right $R$-modules as follows: for an injective non-singular module $M$, if there exists a non-zero central idempotent $e$ such that $Me = 0$, then $\mu(M) = 0$. Otherwise set $\mu(M)$ to be the smallest infinite cardinal $\alpha$ such that $Me$ does not contain a direct sum of $\alpha$ nonzero pairwise isomorphic submodules for some non-zero central idempotent $e$ \cite[p.143]{Good1979_3}. If $R$ is moreover prime and $\alpha$ is a cardinal, then we set
\begin{equation*}
H(\alpha):= \{ r \in R \mid \mu(rR) \leq \alpha\}
\end{equation*}


\begin{fact}\label{PhD3_LatticeIdealsSelfinjVNR}\cite[Corollary 12.22]{Good1979_3} 
Let $R$ be a prime, right self-injective VNR ring either Type I or Type II. Then the rule $\alpha\to H(\alpha)$ defines a lattice isomorphism between the lattice of non-zero two-sided ideals 
and the cardinal interval  $[\omega, \mu(R)]$.
\end{fact}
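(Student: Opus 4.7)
The plan is to verify three things in sequence: first, that $\alpha\mapsto H(\alpha)$ is well-defined as a map into the nonzero two-sided ideals of $R$; second, that it is strictly order-preserving; and third, that it is surjective, with its inverse also order-preserving. Since the two lattices involved are complete and their operations are determined by inclusion, once I have an order-isomorphism of posets between them, I automatically obtain a lattice isomorphism.

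First, I would check that $H(\alpha)$ is a two-sided ideal. Closure under right multiplication is immediate, since $rsR\subseteq rR$ forces $\mu(rsR)\le\mu(rR)$. For addition, I would embed $(r+s)R$ inside the injective hull of $rR\oplus sR$ and note that for infinite $\alpha$ any bound on direct sums of pairwise isomorphic submodules of the latter transfers to the former. The delicate point is left-ideal closure: for $r\in H(\alpha)$ and $t\in R$, the cyclic module $trR$ is a homomorphic image of $rR$, and using self-injectivity together with the fact that every cyclic right ideal in a VNR ring is generated by an idempotent, I would identify the relevant injective hulls and conclude $\mu(trR)\le\mu(rR)$.

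For strict monotonicity I would exploit that $R$ is prime, so that the central idempotent $e$ witnessing $\mu(rR)=0$ cannot exist for nonzero $r$; thus $\mu(rR)$ captures a genuine infinite cardinal. To realize every $\beta\in[\omega,\mu(R)]$ as $\mu(rR)$ for some $r\in R$, I would invoke the Type $\I$ or Type $\II$ hypothesis: such rings contain enough abelian (respectively directly finite) idempotents that one can inductively split off cyclic summands of prescribed cardinal rank, producing for each $\beta$ an idempotent $e$ with $\mu(eR)=\beta$. This forces $H(\alpha)\subsetneq H(\beta)$ whenever $\alpha<\beta\le\mu(R)$.

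For surjectivity, given a nonzero ideal $I$, I would set $\alpha_I:=\sup\{\mu(rR)\mid r\in I\}$ and claim $I=H(\alpha_I)$. The inclusion $I\subseteq H(\alpha_I)$ is by definition, while the reverse requires that any $t$ with $\mu(tR)\le\alpha_I$ already belongs to the two-sided ideal generated by suitable elements of $I$. This \emph{comparability} of cyclic right ideals in terms of their $\mu$-invariants is where the Type $\I/\II$ restriction is essential and is the main obstacle of the proof: in Type $\III$ the invariant $\mu$ no longer classifies cyclic modules up to subisomorphism, which is precisely why the stated bijection excludes that type. Carrying out the comparability step cleanly requires the full dimension theory for self-injective VNR rings, namely the partial ordering on idempotents via subisomorphism of principal right ideals, together with the cardinal-additivity of $\mu$ over independent families of idempotents.
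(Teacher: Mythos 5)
Note first that the paper does not prove this statement at all: it is imported as a \emph{Fact}, cited verbatim from Goodearl's book (Corollary 12.22), so the only thing to measure your outline against is the argument in Goodearl's Chapter 12. Your roadmap correctly identifies the shape of that argument (well-definedness of $H(\alpha)$ as a two-sided ideal, strict monotonicity, surjectivity via comparability of principal right ideals), but at the two points where the theorem actually lives you explicitly defer to ``the full dimension theory'' rather than supply an argument, and there are concrete gaps. For injectivity you need, for every pair $\alpha<\beta\le\mu(R)$, an element $r$ with $\alpha<\mu(rR)\le\beta$; your proposed construction (``inductively split off cyclic summands of prescribed cardinal rank'') is not innocuous, because while the injective hull $eR$ of a direct sum of $\beta$ pairwise isomorphic nonzero submodules automatically satisfies $\mu(eR)\ge\beta^{+}$, bounding $\mu(eR)$ from \emph{above} requires choosing those summands carefully, and this is exactly where the Type I/II hypothesis and Goodearl's decomposition machinery do their work.

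The surjectivity step has a further untreated case. You set $\alpha_I=\sup\{\mu(rR)\mid r\in I\}$ and claim $I=H(\alpha_I)$; but if that supremum is a limit cardinal not attained on $I$, then $I\subseteq\bigcup_{\beta<\alpha_I}H(\beta)=\{r\mid\mu(rR)<\alpha_I\}$, which is itself a two-sided ideal and could a priori be a \emph{proper} subideal of $H(\alpha_I)$ not of the form $H(\gamma)$ for any $\gamma$. Ruling this out needs control over which cardinals occur as values of $\mu$ on cyclics (in Goodearl's development this is handled by separate lemmas about limit cardinals), and your sketch does not address it. Finally, your closing remark about Type III is backwards: a prime right self-injective VNR ring of Type III is simple (Goodearl, Theorem 12.21, used later in this very paper), and there every nonzero principal right ideal is isomorphic to $R$ itself, so $\mu$ classifies cyclics perfectly well; the correspondence fails for Type III because the ideal lattice is too small, not because the invariant is too coarse. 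Since this is a quoted result, the honest conclusion is that your text is a plausible table of contents for Goodearl's proof rather than a proof.
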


\begin{thm}\label{PhD3_MainResult_IIf_III}  
	If there is a ring satisfying (NK) then there exists a countable local subring of a suitable self-injective simple VNR ring of type either $\II_f$ or $\III$ that also satisfies (NK). 
\end{thm}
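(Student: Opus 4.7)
The plan is to combine the reductions of Section~1 with the structural machinery of Section~2 and then rule out undesirable types via Herstein--Small. Starting from a hypothetical (NK)-ring, Proposition~\ref{PhD3_NKReductionFinal} yields a countable local prime $F$-algebra $A$ satisfying (NK), with $F\in\{\Q\}\cup\{\Z_p:p\text{ prime}\}$ and two generators $x,y$ verifying $xA+yA=\J(A)$ and $x+y$ minimal non-nilpotent; Lemma~\ref{PhD3_NKEmbedCountPrimeVNR} then embeds $A$ into a countable prime VNR ring $R$ in which every non-zero two-sided ideal meets $A$. Passing to the maximal right ring of quotients $Q=Q_{\max}^r(R)$ produces a prime right self-injective VNR ring containing $A$ (right self-injective because $R$ is right non-singular, prime because $R$ is prime and essential in $Q_R$), and by Fact~\ref{PhD3_SelfInjVNRDecomposition}, $Q$ is of exactly one of the five types.

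The central step is to produce a simple self-injective VNR quotient $S$ of $Q$ containing a countable local subring satisfying (NK). By Fact~\ref{PhD3_LatticeIdealsSelfinjVNR} the ideal lattice of $Q$ is the well-ordered cardinal interval $[\omega,\mu(Q)]$ in Types~I and~II, so $Q$ is either simple self-injective itself (when $\mu(Q)=\omega$) or admits a unique maximal proper ideal $H(\alpha_0)$ whose quotient $S=Q/H(\alpha_0)$ is simple self-injective VNR; an analogous canonical simple quotient applies in Type~$\III$. The image of $A$ in $S$ is a countable local subring, and one must verify that it still witnesses (NK): while the nil right ideals $xA$ and $yA$ project to nil right ideals of the image, the minimal non-nilpotent element $x+y$ may be killed, so one must exhibit some non-nilpotent element in the image of $\J(A)$.

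Finally, identify the type of $S$. A simple self-injective VNR ring cannot be of Type~$\I_\infty$ or~$\II_\infty$: the prime self-injective VNR rings of these types contain respectively the socle and $H(\omega)$ as proper non-zero two-sided ideals, contradicting simplicity. So $S$ is of Type~$\I_f$, $\II_f$, or~$\III$. To rule out $\I_f$: such $S\cong M_n(D)$ is simple artinian, so has ACC on right and left annihilators, and Lemma~\ref{PhD3_ACCOnRannExtRing} then forces every nil right ideal of the image of $A$ in $S$ to be nilpotent. In particular the images of $xA$ and $yA$ are nilpotent; a standard pigeonhole argument on product words shows that the sum of two nilpotent right ideals is nilpotent, so the image of $\J(A)=xA+yA$ is nilpotent, hence nil---forcing the image of $A$ to fail (NK), a contradiction. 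Therefore $S$ is of Type~$\II_f$ or~$\III$, completing the proof.

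The most delicate step is the preservation of (NK) under the passage to the simple quotient: because every non-zero ideal of $Q$ meets $A$, the maximal proper ideal $H(\alpha_0)$ necessarily has non-trivial intersection with $A$, so the image of $x+y$ is nilpotent and a new (NK)-witness in the simple quotient must be produced. This requires either a careful refinement of the embedding $A\hookrightarrow Q$ so that $A$ lies largely outside $H(\alpha_0)$, or an iteration of the quotient process in tandem with the minimality property of (NK)-witnesses along the tower of quotients.
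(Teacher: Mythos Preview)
Your reduction to $A\subseteq R\subseteq Q=Q_{\max}(R)$ and the elimination of Type~$\I_f$ via Lemma~\ref{PhD3_ACCOnRannExtRing} match the paper. The genuine gap is in how you handle Types~$\I_\infty$ and~$\II_\infty$: you propose passing to a \emph{quotient} $S=Q/H(\alpha_0)$, but this fails on two counts. First, factor rings of right self-injective VNR rings are VNR but in general \emph{not} right self-injective, so your claim that $S$ is self-injective is unjustified and typically false. Second, as you yourself flag, every nonzero ideal of $Q$ meets $A$, so $H(\alpha_0)\cap A\ne 0$; since $x+y$ is minimal non-nilpotent, its image in $S$ \emph{is} nilpotent, and you have produced no mechanism to manufacture a new non-nilpotent sum of principally nilpotent elements in the image. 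Your closing paragraph acknowledges this but offers no solution; an ``iteration of the quotient process'' cannot help, because each pass kills the current witness while simultaneously destroying self-injectivity.

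The paper circumvents both obstacles by passing not to a quotient but to a \emph{corner}. In the purely infinite case one first shows $\mu(Q_Q)=\omega_1$ (countability of $R$ gives $\le\omega_1$, pure infiniteness gives $>\omega$), so $H(\omega)$ is the unique nontrivial ideal. Choosing $n$ with $(x+y)^n\in H(\omega)\cap A$ and an idempotent $e\in H(\omega)$ with $(x+y)^nQ=eQ$, one sets $\xi=(x+y)^n x$, $\upsilon=(x+y)^n y$ and lets $B\subseteq eQe$ be the $F$-algebra generated by $\xi e,\upsilon e$. A short computation shows $\xi e,\upsilon e$ are principally nilpotent in $B$ while $(\xi e+\upsilon e)^k=(x+y)^{k(n+1)}e\ne 0$, so $B$ is a countable local (NK)-algebra. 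The corner $eQe\cong\End_Q(eQ)$ is prime, right self-injective (corner of self-injective), and directly finite (since $\mu(eQ)\le\omega$), hence of Type~$\I_f$, $\II_f$, or~$\III$; Type~$\I_f$ is excluded exactly as before. Finally, prime self-injective VNR rings of Type~$\II_f$ or~$\III$ are automatically simple. The key idea you are missing is this move to $eQe$: it preserves self-injectivity for free and lets one build an explicit (NK) witness by pre-multiplying the generators by $(x+y)^n$.
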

\begin{proof} 

Let $A$ be a prime $F$-algebra over a field $F$ with two principally nilpotent generators $x$ and $y$ satisfying the conditions (K1)--(K3), which exists by Proposition~\ref{PhD3_NKReductionFinal}, let us denote by $x=\varphi(\xi)$ and $y=\varphi(\upsilon)$ are images of the constructed epimorphism $\varphi$  from the proposition where $x+y$ is a minimal non-nilpotent element. Let $R$ be a countable prime VNR ring extension of $A$ ensured by Lemma~\ref{PhD3_NKEmbedCountPrimeVNR}. Denote by $Q:=Q_{max}(R)$ the maximal right ring of quotients of $R$. Let $I, J\subseteq Q$ be two non-zero ideals of $Q$. Since $R$ is essential in $Q_R$ by \cite[Corollary 2.3]{Sten1975} and both $I$, $J$ are right $R$-submodules of $Q$, then $0\ne (I \cap R)(J \cap R)\subseteq IJ$ by Lemma~\ref{PhD3_NKEmbedCountPrimeVNR}, hence $Q$ is prime. By Fact~\ref{PhD3_SelfInjVNRDecomposition} we get that $Q$ is exactly one of type from $\Tc$.

Assume that $Q$ is Type $\I_f$. Then it is (right and left) Artinian by \cite[Corollary 10.3]{Good1979_3}, which implies that $Q$ satisfies (ACC) on right and left annihilators. 
Hence the nil ideal $J(A)$ of $A$ is nilpotent, a contradiction.

Suppose that $Q$ is of type $\I_\infty$ or $\II_\infty$. Note that $Q$ does not contain any uncountable direct sum of right ideals since $Q_{R}$ is an essential extension of a countable ring $R$, i.e. $\mu(Q_Q)\le\omega_1$. Moreover, it is purely infinite, so by \cite[Theorem 10.16]{Good1979_3}(a)$\to$(d), $Q_Q$ contains a countably generated free submodule which yields $\mu(Q_Q) >\omega$, thus $\mu(Q_Q) = \omega_1$. By Fact~\ref{PhD3_LatticeIdealsSelfinjVNR} (cf. also \cite[Theorem 7.3]{Busq92}), $Q$ contains exactly one nontrivial ideal $H(\omega)$. As  $I=H(\omega)\cap A$ is a nonzero ideal of $A$ by Lemma~\ref{PhD3_NKEmbedCountPrimeVNR} and $x+y$ is a minimal non-nilpotent element of $A$ by the condition (K2) of
 Proposition~\ref{PhD3_NKReductionFinal}, there exists $n\in \Nn$ such that $(x+y)^n\in I$. Also $(x+y)^nx, (x+y)^ny \in (x+y)^nQ = eQ \cap I$ for an idempotent $e \in H(\omega)$. Put $\xi:=(x+y)^nx$, $\upsilon:=(x+y)^ny$ and let $B$ denote the $F$-subalgebra of $eQe$ generated by elements $\xi e$ and $\upsilon e$ and $\wB$ denote the $F$-subalgebra of $Q$ generated by elements $\xi$ and $\upsilon$. We claim that the elements $\xi e$ and $\upsilon e$ are principally nilpotent in $B$ and that $\xi e+\upsilon e$ is not nilpotent, hence $B$ satisfies (NK). 

Indeed, since $\wB \subseteq eQ+F$, we can see that $\wB=e\wB+F$ and so $B=\wB e=e \wB e$. Hence every element from the right ideal $(\xi e)B$ of $B$ has the form $\xi \wb e$ for some element $\wb \in \wB$. Because $\xi$ and $\upsilon$ are principally nilpotent in $A$, so in $\wB$ as well, there exists $k \in \Nn$ such that $\left(\xi \wb \right)^k=0$. As $e \xi e=\xi e$ we get $\left(\xi\wb e\right)^k=\left(\xi \wb \right)^k e=0$. This proves that $\xi e$ is a principally nilpotent element.
The argument for $\upsilon e$ is the same. Finally, $(\xi e+\upsilon e)^k=(x+y)^{k(n+1)}e^k\ne 0$ because $(x+y)^{k(n+1)}e^k(x+y)=(x+y)^{k(n+1)+1}\ne 0$.

Since $zQ$ is a direct summand of an injective module $Q_Q$, it is directly finite for any $z\in H(\omega)$ by \cite[Proposition 5.7]{Good1979_3}. Thus $eQe \simeq End_Q(eQ)$ is directly finite by \cite[Lemma 5.1]{Good1979_3} and right self-injective by \cite[Corollary 9.3]{Good1979_3}. Clearly, $eQe$ is prime and it contains the subalgebra $B$ which is local and satisfies the condition (NK) by Lemma~\ref{PhD3_2PrincNilpotentGeneratedFieldAlgebraNKLocal}.

Recall from the initial part of the proof that $eQe$ can not be Type $\I_f$. So we have proved that there exists a prime right self-injective VNR ring $eQe$ being Type either $\II_f$ or $\III$ which contains a local ring satisfying (NK). Finally note that if $Q$ is Type $\II_f$, then it is simple by \cite[Proposition 9.26]{Good1979_3} and, if $Q$ is Type $\III$ then it is simple by \cite[Theorem 12.21]{Good1979_3}.
\end{proof}

Observe that a non-Artinian self-injective simple VNR ring either Type $\II_f$ or Type $\III$ is necessarily uncountable because it contains an infinite set of orthogonal idempotents.


We conclude the paper by examples of self-injective simple VNR rings Type $\II_f$ or $\III$:
\begin{exm}
Let $F$ be a field  and $\Uc$ be a non-principal ultrafilter on $\Nn$. Put $R=\prod_{n \in \Nn} M_n(F)$ and $I=\{r\in R \mid (\exists U\in \Uc)(\forall i\in U)\pi_i(r)=0 \}$ where $\pi_i:R\to M_i(F)$  denotes the natural projection. Then $I$ is a maximal ideal of $R$, hence $R/I$ forms a simple self-injective VNR ring Type $\II_f$ by \cite[Theorem 10.27]{Good1979_3}.
\end{exm}

\begin{exm}\cite[Example 10.11]{Good1979_3}
Let $F$ be a field, $Q=\End_F(F^{(\omega)})$ and $M=\{x\in Q\mid \dim_F(xF^{(\omega)})<\omega\}$. Then $Q/M$ is a simple VNR ring.
If $R$ is the maximal right ring of quotients of $Q/M$ then $R$ is a simple self-injective VNR ring Type $\III$.
\end{exm}

\end{document}